\documentclass[a4paper,reqno,11pt,oneside]{amsart}
\usepackage[all]{xy}
\usepackage{cm}
\numberwithin{equation}{section}

\title{A note on spherical functors}
\author{Ciaran Meachan}

\begin{document}

\maketitle
{\let\thefootnote\relax\footnotetext{Supported by the EPSRC Doctoral Prize Research Fellowship Grant no. EP/K503034/1.}}

\begin{abstract}
We provide a 
new and very short proof of the fact that a spherical functor between certain triangulated categories induces an autoequivalence.
\end{abstract}

\section*{Introduction}
Let $\cD(X)$ denote the bounded derived category of coherent sheaves on a smooth projective variety $X$. If $Y$ is another smooth projective variety then any object $\cP\in\cD(X\times Y)$ gives rise to a Fourier--Mukai functor $F=\Phi_\cP:\cD(X)\to\cD(Y)$, and we refer to the object $\cP$ 
as the Fourier--Mukai kernel of $F$. Similarly, if $Z$ is a third smooth projective variety and $\cQ\in\cD(Y\times Z)$ is any object then the composition $\Phi_\cQ\circ\Phi_\cP$ is induced by the convolution $\cQ\ast\cP\coloneqq\pi_{13*}(\pi_{12}^*\cP\otimes\pi_{23}^*\cQ)\in\cD(X\times Z)$. 

Now, since such an $F=\Phi_\cP:\cD(X)\to\cD(Y)$ has a left adjoint $L=\Phi_{\cP_L}$ and a right adjoint $R=\Phi_{\cP_R}$, we can use the unit and counit of adjunction to define new kernels via the following triangles:
\[\cC\to\Delta_*\cO_X\xra{\eta}\cP_R\ast\cP\qquad\text{and}\qquad\cP\ast\cP_R\xra{\eps}\Delta_*\cO_Y\to\cT.\]
The induced functors $C=\Phi_\cC:\cD(X)\to\cD(X)$ and $T=\Phi_\cT:\cD(Y)\to\cD(Y)$ are called the cotwist and twist of $F$, respectively. 

In this brief note, we give a short and simple proof, relying solely on the structure of adjunctions, and a classical result found in \cite{johnstone2002sketches}, of the following theorem.

\begin{thm*}[\ref{twist-is-an-equivalence} and \ref{R=CL[1]}]
Let $F=\Phi_\cP:\cD(X)\to\cD(Y)$ be a Fourier--Mukai functor between the bounded derived categories of two smooth projective varieties $X$ and $Y$, with left adjoint $L=\Phi_{\cP_L}$ and right adjoint $R=\Phi_{\cP_R}$. Suppose that the cotwist $C=\Phi_\cC$ is an autoequivalence of $\cD(X)$ and $\cP_R\simeq \cC\ast\cP_L[1]$ is any isomorphism. Then the canonical map $\cP_R\to \cP_R\ast\cP\ast\cP_L\to \cC\ast\cP_L[1]$ is an isomorphism and the twist $T=\Phi_\cT$ is an autoequivalence of $\cD(Y)$.
\end{thm*}

The observation that spherical functors give rise to interesting autoequivalences is well documented. It is hard to underestimate the importance of the paper \cite{seidel2001braid}. Their ideas were further developed in \cite{rouquier2006categorification,anno2007spherical} and the foundations were finally completed in \cite{anno2013spherical}. Other notable works include \cite{kuznetsov2015fractional,addington2011new,segal2016all}. Our proof is different from all of these and so we feel it is worthy of mention.
\bigskip

\textbf{Acknowledgements}: I thank Michael Wemyss and Richard Thomas for their encouragement, support and expert advice, as well as Alexander Kuznetsov for helpful discussions. I also thank Arend Bayer, Andreas Hochenegger, Andreas Krug and David Ploog for their constructive comments on a previous version. I am very grateful to the referee for their suggestions on how to improve readability and clarity.

\section{Preliminaries}\label{section-preliminaries}

\begin{dfn}
If $F=\Phi_\cP:\cD(X)\to\cD(Y)$ is a Fourier--Mukai functor with left adjoint $L=\Phi_{\cP_L}$ and right adjoint $R=\Phi_{\cP_R}$ then we distinguish the units and counits with subscripts as follows: \[\Delta_*\cO_X\xra{\eta_R} \cP_R\ast\cP\qquad\cP\ast\cP_R\xra{\epsilon_R}\Delta_*\cO_Y\qquad\Delta_*\cO_Y\xra{\eta_L} \cP\ast\cP_L\qquad\cP_L\ast\cP\xra{\epsilon_L}\Delta_*\cO_X.\]
If an argument only deals with one adjoint pair then we shall drop the subscripts.
\end{dfn}

\begin{lem}
\label{unit-maps-to-counit}
Units and counits are exchanged under the adjunction isomorphisms:
\begin{align*}
\Hom(\Delta_*\cO_X,\cP_R\ast\cP)&\simeq\Hom(\cP_L\ast\cP,\Delta_*\cO_X)\;;\;\eta_R\mapsto\epsilon_L\\
\Hom(\cP\ast\cP_R,\Delta_*\cO_Y)&\simeq\Hom(\Delta_*\cO_Y,\cP\ast\cP_L)\;;\;\epsilon_R\mapsto\eta_L.
\end{align*}
\end{lem}

\begin{proof}
For the first one, recall from \cite[Chapter IV]{maclane1971categories} that an adjunction is a bijection which assigns arrows according to a specific recipe. In particular, we have:
\[\xymatrix{
\Delta_*\cO_X\ar[d]_-{\eta_R} &\ar@{}[d]|\mapsto&\cP_L\ast\cP \ar[rr]^-{\cP_L\ast\cP\ast\eta_R} \ar[drr]_-{\epsilon_L} && \cP_L\ast\cP\ast\cP_R\ast\cP\ar[d]^-{\epsilon\;\simeq\;\epsilon_L\circ (\cP_L\ast\epsilon_R\ast\cP)} \\ 
\cP_R\ast\cP &&&& \Delta_*\cO_X,
}\]
where $\eps$ is the counit associated to the adjoint pair $(\cP_L\ast\cP,\cP_R\ast\cP)$. Using naturality of counits, we can rewrite this universal arrow as $\eps\simeq\epsilon_L\circ (\cP_L\ast\epsilon_R\ast\cP)$. Finally, the composition of the arrows being $\eps_L$ follows from convolving the triangular identity $(\eps_R\ast\cP)\circ(\cP\ast\eta_R)\simeq\id_\cP$ on the left with $\cP_L$; see \cite[Theorem IV.1.1.(ii)]{maclane1971categories}. The second statement follows from a similar argument.
\end{proof}

\begin{dfn}
Let $F=\Phi_\cP:\cD(X)\to\cD(Y)$ be a Fourier--Mukai functor between the derived categories of two smooth projective varieties $X$ and $Y$ with left adjoint $L=\Phi_{\cP_L}$ and right adjoint $R=\Phi_{\cP_R}$. Using the units and counits above, we can define the \emph{twist} $T=\Phi_\cT$ and \emph{cotwist} $C=\Phi_\cC$ of $F$ by the following exact triangles:
\[\cP\ast\cP_R\xra{\epsilon_R}\Delta_*\cO_Y\xra{\alpha_R} \cT\xra{\beta_R}\cP\ast\cP_R[1]\quad\;\text{and}\;\quad \cC\xra{\delta_R}\Delta_*\cO_X\xra{\eta_R} \cP_R\ast\cP\xra{\gamma_R}\cC[1].\] 
Similarly, we define the \emph{dual twist} $T'=\Phi_{\cT'}$ and \emph{dual cotwist} $C'=\Phi_{\cC'}$ of $F$ by:
\[\cT'\xra{\delta_L}\Delta_*\cO_Y\xra{\eta_L} \cP\ast\cP_L\xra{\gamma_L}\cT'[1]\quad\;\text{and}\;\quad \cP_L\ast\cP\xra{\epsilon_L}\Delta_*\cO_X\xra{\alpha_L} \cC'\xra{\beta_L}\cP_L\ast\cP[1].\]
\end{dfn}

\begin{lem}
\label{left-adjoint}
$C'=\Phi_{\cC'}$ and $T'=\Phi_{\cT'}$ are left adjoint to $C=\Phi_\cC$ and $T=\Phi_\cT$, respectively.
\end{lem}

\begin{proof}
This can be found in \cite[Remark 2.10]{kuznetsov2015fractional}. For a direct argument, first take left adjoints of the triangle $\cC\to\Delta_*\cO_X\smash{\;\xra{\eta_R}\;}\cP_R\ast\cP$ defining the kernel of the cotwist $C=\Phi_\cC$ to get an exact triangle $\cP_L\ast\cP\to\Delta_*\cO_X\to \cC'$ and then use Lemma \ref{unit-maps-to-counit} to see that the 
$\Hom(\Delta_*\cO_X,\cP_R\ast\cP)\simeq\Hom(\cP_L\ast\cP,\Delta_*\cO_X)$ maps the unit $\eta_R$ to the counit $\epsilon_L$. This shows that $C'\dashv C$. Similarly, we can show $T'\dashv T$.
\end{proof}

\begin{lem}\label{basic-identities}
Let $F=\Phi_\cP:\cD(X)\to\cD(Y)$ be a Fourier--Mukai functor with right adjoint $R=\Phi_{\cP_R}$. Then we have the following natural isomorphisms: 
\begin{align*}
\cT\ast\cP[-1]\xra{\beta_R\ast\cP[-1]}\cP\ast&\cP_R\ast\cP\xra{\cP\ast\gamma_R}\cP\ast\cC[1],\quad\text{and}\\
\cP_R\ast\cT[-1]\xra{\cP_R\ast\beta_R[-1]}\cP_R\ast&\cP\ast\cP_R\xra{\gamma_R\ast\cP_R} \cC\ast\cP_R[1].
\end{align*}
Similarly, if $F=\Phi_\cP:\cD(X)\to\cD(Y)$ be a Fourier--Mukai functor with left adjoint $L=\Phi_{\cP_L}$ then we have the following natural isomorphisms: 
\begin{align*}
\cP\ast\cC'[-1]\xra{\cP\ast\beta_L[-1]}\cP\ast&\cP_L\ast\cP\xra{\gamma_L\ast\cP} \cT'\ast\cP[1]\quad\trm{and}\\
\cC'\ast\cP_L[-1]\xra{\beta_L\ast\cP_L[-1]}\cP_L\ast&\cP\ast\cP_L\xra{\cP_L\ast\gamma_L} \cP_L\ast\cT'[1].
\end{align*}
\end{lem}

\begin{proof}
These identities are standard, and all stem from the triangular identities associated to the adjoint pairs. For example, \cite[Theorem IV.1.1(ii)]{maclane1971categories} tells us that the composition $\cP_R \smash{\;\xra{\eta_R\ast \cP_R}\;} \cP_R\ast\cP\ast\cP_R \smash{\;\xra{\cP_R\ast\epsilon_R}\;} \cP_R$ is the identity on $\cP_R$. This provides a splitting of $\cP_R\ast\cP\ast\cP_R$ and allows us to complete the following diagram: 
\[\xymatrix{&& \cP_R\ast\cT[-1] \ar[d]_(0.4){\cP_R\ast\beta_R[-1]} \ar[drr]^[@!155]{\sim} &&\\
\cP_R \ar[rr]^-{\eta_R\ast\cP_R} \ar@{=}[drr] && \cP_R\ast\cP\ast\cP_R \ar[rr]^(0.45){\gamma_R\ast\cP_R} \ar[d]^{\cP_R\ast\epsilon_R} && \cC\ast\cP_R[1]\\
&& \cP_R &&}\]
using the octahedral axiom to get a functorial isomorphism: 
\[\cP_R\ast\cT[-1]\xra{\cP_R\ast\beta_R[-1]}\cP_R\ast\cP\ast\cP_R\xra{\gamma_R\ast\cP_R} \cC\ast\cP_R[1].\qedhere\]
\end{proof}

The following result is the key technical lemma which will allow us to easily deduce that the twist associated to a spherical functor is an autoequivalence.

\begin{lem}\label{wee-beauty}
Let $F=\Phi_\cP:\cD(X)\to\cD(Y)$ be a Fourier--Mukai functor with a right adjoint $R=\Phi_{\cP_R}$. If there is any natural isomorphism (not necessarily the unit of adjunction) between $\cP_R\ast\cP$ and $\Delta_*\cO_X$ then the unit $\eta:\Delta_*\cO_X\smash{\;\xra\sim\;} \cP_R\ast\cP$ of adjunction is an isomorphism. That is, $F=\Phi_\cP$ is fully faithful.
\end{lem}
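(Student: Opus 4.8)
The plan is to reduce this to an elementary fact about the centre of $\cA$, i.e.\ the monoid of natural endomorphisms of $\id_\cA$ under composition. Recall that the adjunction $F\dashv R$ equips $T:=RF$ with its usual monad structure: unit $\eta_R:\id_\cA\to T$ and multiplication $\mu:=R\epsilon_R F:T^2\to T$, subject to the two unit axioms and the associativity axiom. We are handed a natural isomorphism $\theta:T\xra{\sim}\id_\cA$ which a priori has nothing to do with $\eta_R$, and the goal is to deduce that $\eta_R$ is itself invertible.

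The first step is to transport the monad structure on $T$ along $\theta$, obtaining a monad structure on the identity functor $\id_\cA$ with unit $\widetilde{\eta}:=\theta\circ\eta_R:\id_\cA\to\id_\cA$ and some multiplication $\widetilde{\mu}:\id_\cA\to\id_\cA$. Transport of structure along an isomorphism automatically preserves the monad axioms, so this step requires no computation. Next, observe that a monad structure on $\id_\cA$ is very rigid: since whiskering a natural transformation by the identity functor does nothing, the two unit axioms both collapse to the single equation $\widetilde{\mu}\circ\widetilde{\eta}=\id_{\id_\cA}$, and the associativity axiom becomes vacuous. In other words, $\widetilde{\eta}$ is a split monomorphism in the monoid of natural endomorphisms of $\id_\cA$.

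Now the punchline: that monoid is commutative. Indeed, for any natural transformations $a,b:\id_\cA\to\id_\cA$, applying naturality of $b$ to the morphism $a_X:X\to X$ gives $b_X\circ a_X=a_X\circ b_X$ for every object $X$, hence $b\circ a=a\circ b$. In a commutative monoid a one-sided inverse is automatically a two-sided inverse, so the equation $\widetilde{\mu}\circ\widetilde{\eta}=\id$ forces $\widetilde{\eta}$ to be invertible. Consequently $\eta_R=\theta^{-1}\circ\widetilde{\eta}$ is an isomorphism, which is precisely the assertion that $F$ is fully faithful.

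I do not expect a serious obstacle here; the point is exactly that the statement is purely formal, the triangulated (indeed additive) structure of $\cA$ playing no role at all, which is in the spirit of the paper. The only thing to be careful about is not to try to show that $\theta$ is itself the unit of adjunction, or even compatible with the monad structure; instead one conjugates the entire monad across $\theta$ and lets commutativity of the centre of $\cA$ finish the job. Equivalently, transporting shows that the monad $RF$ is idempotent, and the unit of an idempotent monad whose underlying functor is an equivalence is automatically invertible.
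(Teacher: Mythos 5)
Your argument is correct and is essentially the paper's own proof (which is the dual of Johnstone's Lemma 1.1.1): transport the monad structure on $RF$ across the given isomorphism to a monad structure on $\id_\cA$, note the unit axiom makes the transported unit a one-sided invertible element of the commutative monoid of natural endomorphisms of $\id_\cA$, hence invertible, and transfer back to conclude $\eta_R$ is an isomorphism. No substantive difference from the paper's route.
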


\begin{proof}
This statement is the dual of \cite[Lemma 1.1.1]{johnstone2002sketches} 
translated into the setting of Fourier--Mukai functors.
\end{proof}

\section{Spherical Functors}
\begin{dfn}
We say that a Fourier--Mukai functor $F=\Phi_\cP:\cD(X)\to\cD(Y)$ with left adjoint $L=\Phi_{\cP_L}$ and right adjoint $R=\Phi_{\cP_R}$ is \emph{spherical} if the cotwist $C=\Phi_\cC$ is an autoequivalence of $\cD(X)$ and the canonical map: 
\[(\gamma_R\ast\cP_L)\circ (\cP_R\ast\eta_L): \cP_R\to \cP_R\ast\cP\ast\cP_L\to \cC\ast\cP_L[1],\] 
is a functorial isomorphism. 
\end{dfn}

\begin{rmk}
Proposition \ref{R=CL[1]} shows that if $C=\Phi_\cC$ is an autoequivalence then \emph{any} isomorphism $\cP_R\simeq \cC\ast\cP_L[1]$ 
ensures that $(\gamma_R\ast\cP_L)\circ (\cP_R\ast\eta_L)
$ is an isomorphism.
\end{rmk}


\begin{thm}\label{twist-is-an-equivalence}
Let $F=\Phi_\cP:\cD(X)\to\cD(Y)$ be a Fourier--Mukai functor between the bounded derived categories of two smooth projective varieties $X$ and $Y$, with left adjoint $L=\Phi_{\cP_L}$ and right adjoint $R=\Phi_{\cP_R}$. 
\begin{itemize}
\item[(i)] If the canonical map $(\gamma_R\ast\cP_L)\circ (\cP_R\ast\eta_L):\cP_R\to \cP_R\ast\cP\ast\cP_L\to \cC\ast\cP_L[1]$ is an isomorphism then the unit $\eta_T:\Delta_*\cO_Y\smash{\;\xra\sim\;} \cT\ast\cT'$ of adjunction is an isomorphism. 
\item[(ii)] If the canonical map $(\gamma_R\ast\cP_L)\circ (\cP_R\ast\eta_L):\cP_R\to \cP_R\ast\cP\ast\cP_L\to \cC\ast\cP_L[1]$ is an isomorphism and $C=\Phi_\cC$ is an autoequivalence of $\cD(X)$ then $T=\Phi_\cT$ is an autoequivalence of $\cD(Y)$.
\end{itemize}
\end{thm}

\begin{proof}
(i) We use the triangles $\cP\ast\cP_R\to\Delta_*\cO_Y\to \cT$ and $\cT'\to\Delta_*\cO_Y\to \cP\ast\cP_L$ to construct a commutative diagram:
\[\xymatrix{\cP\ast\cP_R \ar[rr]^-{\cP\ast\cP_R\ast\eta_L} \ar[d]_-{\epsilon_R} && \cP\ast\cP_R\ast\cP\ast\cP_L \ar[rr]^-{\cP\ast\cP_R\ast\gamma_L} \ar[d]^-{\epsilon_R\ast\cP\ast\cP_L} && \cP\ast\cP_R\ast\cT'[1] \ar[d]^-{\epsilon_R\ast\cT'[1]}\\ 
\Delta_*\cO_Y \ar[rr]^-{\eta_L} \ar[d]_-{\alpha_R} && \cP\ast\cP_L \ar[rr]^-{\gamma_L} \ar[d]^-{\alpha_R\ast\cP\ast\cP_L} && \cT'[1] \ar[d]^-{\alpha_R\ast\cT'[1]}\\ 
\cT \ar[rr]^-{\cT\ast\eta_L} && \cT\ast\cP\ast\cP_L[1] \ar[rr]^-{\cT\ast\gamma_L} && \cT\ast\cT'[1].}\]
If we consider the top right square of the previous diagram together with the commutative square:
\[\xymatrix{\cP\ast\cP_L\ar@{=}[d]\ar@{=}[r]&\cP\ast\cP_L\ar@{=}[d]\\
\cP\ast\cP_L\ar@{=}[r]&\cP\ast\cP_L,}\]
consisting of four copies of $\cP\ast\cP_L$ and all maps being the identity, then we can use the natural map $\cP\ast\eta_R\ast\cP_L:\cP\ast\cP_L\to \cP\ast\cP_R\ast\cP\ast\cP_L$ to form a commutative diagram:
\[\xymatrix@!0@C=4.5em@R=4.5em{& \cP\ast\cP_R\ast\cP\ast\cP_L\ar[rrr]^-{\cP\ast\cP_R\ast\gamma_L}\ar'[d][dd]^-{\epsilon_R\ast\cP\ast\cP_L} &&& \cP\ast\cP_R\ast\cT'[1]\ar[dd]^-{\epsilon_R\ast\cT'[1]}\\
\cP\ast\cP_L\ar@{=}[dd]\ar@{=}[rrr]\ar[ur]^{\cP\ast\eta_R\ast\cP_L} &&& \cP\ast\cP_L\ar@{=}[dd]\ar[ur]^(0.4){(\cP\ast\cP_R\ast\gamma_L)\circ (\cP\ast\eta_R\ast\cP_L)} &\\
& \cP\ast\cP_L\ar'[rr][rrr]^-{\gamma_L} &&& \cT'[1].\\
\cP\ast\cP_L\ar@{=}[rrr]\ar@{=}[ur] &&& \cP\ast\cP_L \ar[ur]_{\gamma_L}&}\]
Indeed, the left face is just the triangular identity convolved with $\cP_L$ on the right; the top and bottom faces are clearly commutative and the commutativity of the right face follows from the commutativity of the other faces of the cube.

Applying the octahedral axiom to the top and right faces of this commutative diagram produces the 
following commutative diagram of triangles:
\begin{figure}[htbp!]
\[\xymatrix@!0@C=2.3em@R=2.1em
{
 && \cP\ast\cP_R \ar[rrrrrr]^-{(\cP\ast\gamma_R\ast\cP_L)\circ (\cP\ast\cP_R\ast\eta_L)} &&&&&& \cP\ast\cC\ast\cP_L[1] \ar[rrrrrr] &&&&&& \cP\ast\cQ[1]\ar[dddd]\\\\
 \cP\ast\cP_R \ar[rrrrrr]^-{\cP\ast\cP_R\ast\eta_L}\ar[dddd]_-{\epsilon_R} \ar@{=}[uurr] &&&&&& \cP\ast\cP_R\ast\cP\ast\cP_L \ar'[dd][dddd]^-{\epsilon_R\ast\cP\ast\cP_L} \ar[uurr]^-{\cP\ast\gamma_R\ast\cP_L} \ar[rrrrrr]^-{\cP\ast\cP_R\ast\gamma_L} &&&&&& \cP\ast\cP_R\ast\cT'[1] \ar[uurr] \ar[dddd]^(0.3){\epsilon_R\ast\cT'[1]} &&\\\\
 &&&& \cP\ast\cP_L \ar[uurr]^-{\cP\ast\eta_R\ast\cP_L}\ar@{=}[dddd]\ar@{=}[rrrrrr] &&&&&& \cP\ast\cP_L \ar@{=}[dddd]\ar[uurr]^(0.4){(\cP\ast\cP_R\ast\gamma_L)\circ (\cP\ast\eta_R\ast\cP_L)} &&&& \Delta_*\cO_Y[1] \ar[dddd]\\\\
 \Delta_*\cO_Y \ar'[rrrr][rrrrrr]^-{\eta_L} \ar[dddd]_-{\alpha_R} &&&&&& \cP\ast\cP_L \ar'[dd][dddd]^-{\alpha_R\ast\cP\ast\cP_L} \ar'[rrrr][rrrrrr]^-{\gamma_L} &&&&&& \cT'[1] \ar[dddd]^(0.3){\alpha_R\ast\cT'[1]}\ar[uurr]^(0.6){\delta_L[1]} &&\\\\
 &&&& \cP\ast\cP_L \ar@{=}[uurr] \ar@{=}[rrrrrr] &&&&&& \cP\ast\cP_L \ar[uurr]^-{\gamma_L} &&&& \cT\ast\cT'[1]\\\\
 \cT \ar[rrrrrr]^-{\cT\ast\eta_L} &&&&&& \cT\ast\cP\ast\cP_L \ar[rrrrrr]^-{\cT\ast\gamma_L} &&&&&& \cT\ast\cT'[1]. \ar@{=}[uurr] &&
}\]
\caption{Diagram of functors 
associated to $\cT\ast\cT'[1]$.}\label{big-diagram-TT'}\label{fig:commdiag}
\end{figure}

The canonical map $(\gamma_R\ast\cP_L)\circ (\cP_R\ast\eta_L):\cP_R\to \cP_R\ast\cP\ast\cP_L\to \cC\ast\cP_L[1]$ is an isomorphism by assumption. Therefore, 
convolving the canonical map with $\cP$ on the left, to get $(\cP\ast\gamma_R\ast\cP_L)\circ (\cP\ast\cP_R\ast\eta_L)$, must also be an isomorphism. This implies $\cP\ast\cQ[1]\simeq0$ which in turn provides a natural isomorphism $\Delta_*\cO_Y[1]\smash{\;\xra\sim\;} \cT\ast\cT'[1]$. By Lemma \ref{wee-beauty}, this implies that the unit $\eta_T:\Delta_*\cO_Y\smash{\;\xra\sim\;} \cT\ast\cT'$ of 
adjunction 
is an isomorphism.

(ii) Part (i) proves that $T'=\Phi_{\cT'}:\cD(Y)\to\cD(Y)$ is fully faithful and so it remains to show that $T'=\Phi_{\cT'}$ is an equivalence. By \cite[Lemma 1.50]{huybrechts2006fourier}, it is enough to show that $
\ker\Phi_\cT=0$. To see this, suppose that $\Phi_\cT(\cE)=0$ for some $\cE\in\cD(Y)$. Then, by Lemma \ref{basic-identities}, we have 
$
\Phi_{\cC\ast\cP_R}(\cE)\simeq \Phi_{\cP_R\ast\cT}(\cE)[-2]=(\Phi_{\cP_R}(\Phi_\cT(\cE))[-2]=0$ which implies $\Phi_{\cP_R}(\cE)=0$ since the cotwist $C=\Phi_\cC$ is an autoequivalence by assumption. Now, the defining triangle $\Phi_\cP(\Phi_{\cP_R}(\cE))\to \cE\to \Phi_\cT(\cE)$ shows that $\cE=0$ and so $\ker \Phi_\cT=0$ as required. Finally, we know that $T=\Phi_\cT$ is right adjoint to $T'=\Phi_{\cT'}$ by Lemma \ref{left-adjoint} and so $T=\Phi_\cT$ must be an equivalence as well.
\end{proof}

\begin{cor}
The left (or right) adjoint of a spherical functor $F=\Phi_\cP$ with twist $T=\Phi_\cT$ and cotwist $C=\Phi_\cC$ is a spherical functor with twist $C^{-1}
$ and cotwist $T^{-1}
$.
\end{cor}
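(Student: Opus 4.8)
The plan is to prove that the left adjoint $L$ is spherical with twist $C^{-1}$ and cotwist $T^{-1}$, and then to deduce the statement for the right adjoint $R$. First I would observe that the right adjoint of $L$ is $F$, and that the unit and counit of the adjunction $L\dashv F$ are exactly $\eta_L$ and $\epsilon_L$; consequently the triangle defining the cotwist of $L$ is precisely the triangle $T'\to\id_\cB\xra{\eta_L}FL\to T'[1]$ defining $T'$, and the triangle defining the twist of $L$ is precisely the triangle $LF\xra{\epsilon_L}\id_\cA\to C'\to LF[1]$ defining $C'$. Hence $L$ has cotwist $T'$ and twist $C'$. By Lemma \ref{left-adjoint} we have $T'\dashv T$ and $C'\dashv C$; since $C$ is an autoequivalence by hypothesis and $T$ is one by Theorem \ref{twist-is-an-equivalence}, it follows that $T'\simeq T^{-1}$ and $C'\simeq C^{-1}$. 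In particular the cotwist $T'\simeq T^{-1}$ of $L$ is an autoequivalence of $\cB$.

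To apply Theorem \ref{twist-is-an-equivalence} together with Proposition \ref{R=CL[1]} to the functor $L$, it now suffices to exhibit an isomorphism between the right adjoint $F$ of $L$ and the composite $T'\circ L^*[1]$ of its cotwist $T'$ with $L^*[1]$, where $L^*$ is the left adjoint of $L$ (which exists in our Fourier--Mukai setting). For this I would apply Lemma \ref{LT[-1]=R}(i) to the spherical functor $F$, obtaining $LT[-1]\simeq R$, and then pass to left adjoints: the left adjoint of $R$ is $F$, while the left adjoint of $LT[-1]$ is $T'\circ L^*[1]$, so $F\simeq T'\circ L^*[1]$. Theorem \ref{twist-is-an-equivalence} and Proposition \ref{R=CL[1]}, applied to $L$, then show that $L$ is spherical and that its twist $C'\simeq C^{-1}$ is an autoequivalence of $\cB$. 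Thus $L$ is spherical with twist $C^{-1}$ and cotwist $T^{-1}$.

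For the right adjoint $R$ I would use the identity $R\simeq CL[1]$, which presents $R$ as the spherical functor $L$ composed on the left with the autoequivalence $C[1]$ of $\cA$. A short check shows that composing a spherical functor $G$ on the left with an autoequivalence $\Phi$ again yields a spherical functor: one has $(\Phi G)_R(\Phi G)=G_R G$, so the cotwist triangle of $\Phi G$ is the cotwist triangle of $G$ and the two cotwists agree; the characterizing isomorphism for $G$ remains valid for $\Phi G$ after composing with $\Phi^{-1}$, so Theorem \ref{twist-is-an-equivalence} with Proposition \ref{R=CL[1]} applies to $\Phi G$; and the twist triangle of $\Phi G$ is the $\Phi$-conjugate of that of $G$, so the twist of $\Phi G$ is $\Phi$ composed with the twist of $G$ composed with $\Phi^{-1}$. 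Applying this with $G=L$ and $\Phi=C[1]$, and using that $C$ commutes with the shift, we conclude that $R$ is spherical with cotwist $T^{-1}$ and twist $(C[1])\circ C^{-1}\circ(C[1])^{-1}\simeq C^{-1}$.

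The substance of the argument is entirely in the bookkeeping: matching the units and counits occurring in the cotwist and twist triangles of $L$ with $\eta_L$ and $\epsilon_L$, computing left and right adjoints of composites such as $LT[-1]$ and $(C[1])\circ L$ correctly (shifts included), and verifying that the unit of the adjunction $\Phi G\dashv(\Phi G)_R$ agrees with that of $G\dashv G_R$ under the identification $(\Phi G)_R(\Phi G)=G_R G$. None of this is deep, but it is the only place where a shift or an adjunction could be mislaid, so that is the step I would watch most carefully.
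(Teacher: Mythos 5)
Your proof is correct, but it takes a more explicit route than the paper, whose entire proof is the observation that adjunction exchanges units and counits (Lemma \ref{left-adjoint}): since the unit and counit of $L\dashv F$ are $\eta_L$ and $\epsilon_L$, the twist and cotwist triangles of $L$ are literally the triangles defining $C'$ and $T'$, which are quasi-inverse to $C$ and $T$, and the remaining spherical condition is obtained by taking adjoints of the spherical data for $F$; the case of $R$ is dual. You fill in exactly what that one-liner leaves implicit, and you do so by a slightly different mechanism: for $L$ you produce the required identification $F\simeq T'L^{*}[1]$ by taking left adjoints of the isomorphism $LT[-1]\simeq R$ of Lemma \ref{LT[-1]=R}(i), and then upgrade any such isomorphism to the canonical map via Proposition \ref{R=CL[1]} applied to $L$ (whose cotwist $T'\simeq T^{-1}$ is an autoequivalence); for $R$ you avoid dualizing altogether by writing $R\simeq CL[1]$ and checking that postcomposing a spherical functor with an autoequivalence remains spherical, with the same cotwist and conjugated twist, which indeed yields cotwist $T^{-1}$ and twist $C^{-1}$. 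Both routes are sound: the paper's is shorter because it treats $L$ and $R$ symmetrically through the 2-functoriality of taking adjoints of kernels, while yours makes the adjoint-identification condition completely explicit at the price of the bookkeeping you flag (existence of the second adjoint $L^{*}$, adjoints of shifted composites, and compatibility of the units under the identification of the composite monad with $G_RG$), all of which is routine in the Fourier--Mukai setting the paper works in, so there is no gap.
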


\begin{proof}
This follows from the fact that the units and counits are exchanged under adjunction; see Lemma \ref{unit-maps-to-counit}. 
\end{proof}

\section{Identifying adjoints by an autoequivalence}\label{appendix}
We work with the same notation that was introduced in Section \ref{section-preliminaries}. For details on adjunctions, we refer to \cite[\S IV.1--4]{maclane1971categories}.


\begin{lem}\label{RF=CLF[1]}
If $C=\Phi_\cC$ is an autoequivalence of $\cD(X)$ then the canonical map:
\[\cP_R\ast\cP\xra{\cP_R\ast\eta_L\ast\cP}\cP_R\ast\cP\ast\cP_L\ast\cP\xra{\gamma_R\ast\cP_L\ast\cP}\cC\ast\cP_L\ast\cP[1],
\]
is an isomorphism.
\end{lem}

\begin{proof}
First, let us observe that we have a commutative diagram: 
\[\xymatrix@C=1.6em{\cP_R\ast\cP \ar[rrr]^-{\cP_R\ast\eta_L\ast\cP} \ar@{=}[drrr] &&& \cP_R\ast\cP\ast\cP_L\ast\cP \ar[d]^-{\cP_R\ast\cP\ast\epsilon_L} \ar[rrr]^-{\gamma_R\ast\cP_L\ast\cP} &&& \cC\ast\cP_L\ast\cP[1] \ar[d]^-{\cC\ast\epsilon_L[1]}\\ &&& \cP_R\ast\cP \ar[rrr]^-{\gamma_R} &&& \cC[1].}\]
Indeed, the left hand side is just the triangular identity convolved with $\cP_R$ on the left, and the square 
commutes since the arrows act on separate variables. 
That is, 
\begin{equation}\label{gamma_R}
\gamma_R\simeq (\cC\ast\epsilon_L[1])\circ (\gamma_R\ast\cP_L\ast\cP)\circ (\cP_R\ast\eta_L\ast\cP).
\end{equation}

Now, by Lemma \ref{unit-maps-to-counit} we know that $\eps_L$ is the left adjunct of $\eta_R$. Moreover, in the proof of Lemma \ref{left-adjoint}, we observed that the dual cotwist triangle is the left adjoint of the cotwist triangle. Thus, by comparing triangles, we see that $\alpha_L$ must be the left adjunct of $\delta_R$, or equivalently, $\delta_R$ is the right adjunct of $\alpha_L$. That is, we have:
\begin{equation}\label{delta-right-adjunct-of-alpha}
\begin{aligned}
\xymatrix{\Delta_*\cO_X\ar[d]_-{\alpha_L} &\ar@{}[d]|\mapsto&\cC \ar[rr]^-{\cC\ast\alpha_L} \ar[drr]_-{\delta_R} && \cC\ast\cC'\ar[d]^-{\eps_C}\\ \cC' &&&& \Delta_*\cO_X.}
\end{aligned}
\end{equation}

Combining the right hand side of \eqref{delta-right-adjunct-of-alpha} with \eqref{gamma_R}, we can observe that we have a commutative diagram of triangles: 
\[\xymatrix{\cP_R\ast\cP \ar[d]_-{(\gamma_R\ast\cP_L\ast\cP)\circ (\cP_R\ast\eta_L\ast\cP)} \ar[rr]^-{\gamma_R} && \cC[1] \ar[rr]^-{\delta_R[1]} \ar@{=}[d] && \Delta_*\cO_X[1] \\ \cC\ast\cP_L\ast\cP[1] \ar[rr]^-{\cC\ast\epsilon_L[1]} && \cC[1] \ar[rr]^-{\cC\ast\alpha_L[1]} && \cC\ast\cC'[1]\ar[u]_-{\eps_C[1]}^-\wr,}\]
where $\eps_C:\cC\ast\cC'\to\Delta_*\cO_X$ is an isomorphism since $C=\Phi_\cC$ is an autoequivalence by assumption. Since the second and third vertical arrows are isomorphisms, it follows that the first vertical arrow is also an isomorphism.
\end{proof}

\begin{prop}\label{R=CL[1]}
Suppose the cotwist $C=\Phi_\cC$ is an autoequivalence of $\cD(X)$. Then any isomorphism $\phi:\cP_R\smash{\;\xra\sim\;} \cC\ast\cP_L[1]$ implies the canonical map: 
\[\chi:\cP_R\xra{\cP_R\ast\eta_L}\cP_R\ast\cP\ast\cP_L\xra{\gamma_R\ast\cP_L}\cC\ast\cP_L[1],\]
is an isomorphism.
\end{prop}

\begin{proof}
If we consider the triangle $\cP_R\xra{\chi}\cC\ast\cP_L[1]\to\cQ$ then $\chi$ is an isomorphism if and only if $\cQ\simeq0$. To show that $\cQ\simeq0$ it is sufficient to prove that the induced Fourier--Mukai functor $\Phi_\cQ:\cD(Y)\to\cD(Y)$ is zero on a spanning class of $\cD(Y)$. To this end, we use the spanning class $\Omega=\im\Phi_\cP\cup\ker\Phi_{\cP_R}$ from \cite[\S2.4]{addington2011new}. Indeed, convolving the triangle defined by $\chi$ with $\cP$ gives: \[\cP_R\ast\cP\xra{\chi\ast\cP}\cC\ast\cP_L\ast\cP[1]\to\cQ\ast\cP,\] and Lemma \ref{RF=CLF[1]} tells us that $\chi\ast\cP$ is an isomorphism. That is, $\cQ\ast\cP\simeq0$ and so we see that $\Phi_\cQ$ is zero on $\im\Phi_\cP$. Next, we take an object $\cE\in\ker\Phi_{\cP_R}$ and evaluate the induced triangle of Fourier--Mukai functors on it to get: \[\Phi_{\cP_R}(\cE)=0\to\Phi_\cC\circ\Phi_{\cP_L}(\cE)[1]\to\Phi_\cQ(\cE).\] By assumption, we have some isomorphism $\phi:\cP_R\smash{\;\xra\sim\;} \cC\ast\cP_L[1]$ allowing us to conclude that $\Phi_\cC\circ\Phi_{\cP_L}(\cE)=0$ and hence $\Phi_\cQ(\cE)=0$. This shows that $\Phi_\cQ$ is also zero on $\ker\Phi_{\cP_R}$ and thus $\cQ\simeq0$, which completes the proof.
\end{proof}

\begin{rmks}
The hypotheses of the results in this section are stronger than necessary. Indeed, Lemma \ref{RF=CLF[1]} and Proposition \ref{R=CL[1]} only use the weaker statements that $\Phi_{\cC'}$ is fully faithful and $\ker\Phi_{\cP_R}=\ker\Phi_{\cP_L}$, respectively.
\end{rmks}

\bibliographystyle{alpha}
\bibliography{ref}

\end{document}